\newtheoremstyle{stylename}
  {\z@}
  {\z@}
  {}
  {\parindent}
  {\itshape}
  {:}
  {5\p@ plus\p@ minus\p@\relax}
  {\thmname{#1}\thmnumber{ #2}\thmnote{ (#3)}}
\newlength{\myparindent}
\theoremstyle{stylename}
\newtheorem{theorem}{Theorem}
\newtheorem{corollary}[theorem]{Corollary}
\newtheorem{lemma}[theorem]{Lemma}
\newtheorem{proposition}[theorem]{Proposition}
\newtheorem{example}{Example}
\newtheorem{remark}{Remark}
\newcommand{\calF}{\mathcal{F}}
\newcommand{\calH}{\mathcal{H}}
\newcommand{\calO}{\mathcal{O}}
\newcommand{\calS}{\mathcal{S}}
\newcommand{\calT}{\mathcal{T}}
\newcommand{\fhat}{\hat{f}}
\newcommand{\ghat}{\hat{g}}
\newcommand{\xhat}{\hat{x}}
\newcommand{\fbar}{\bar{f}}
\newcommand{\gbar}{\bar{g}}
\newcommand{\xbar}{\bar{x}}
\newcommand{\ftilde}{\tilde{f}}
\newcommand{\gtilde}{\tilde{g}}
\newcommand{\xtilde}{\tilde{x}}
\renewcommand{\Re}{\mathbb{R}}
\newcommand{\Ne}{\mathbb{N}}
\newcommand{\dxd}{{d\times d}}
\newcommand{\vol}{\mathrm{vol}}
\newcommand\lyapFun{$m$-\textsc{Lyap-Fun-Exists}\xspace}
\newcommand\true{\mathit{true}}
\newcommand\false{\mathit{false}}
\newcommand\comment[1]{}
\title{\LARGE\bfseries Learning fixed-complexity polyhedral Lyapunov functions from counterexamples}
\author{Guillaume O.\ Berger and Sriram Sankaranarayanan
\thanks{G.\ Berger is funded in part by a fellowship from the Belgian American Educational
Foundation (BAEF). This work was funded by the US National Science Foundation (NSF)
under grant numbers CPS 1836900 and 1932189. Both authors are affiliated with
the University of Colorado Boulder. Emails: \{guillaume.berger,srirams\}@colorado.edu}}
\newif\ifextended
\begin{document}

\maketitle
\thispagestyle{empty}
\pagestyle{empty}

\begin{abstract}
We study the problem of synthesizing polyhedral Lyapunov functions
for hybrid linear systems.
Such functions are defined as convex piecewise linear functions, with a finite number of pieces.
We first prove that deciding whether there exists an $m$-piece polyhedral Lyapunov function
for a given hybrid linear system is NP-hard.
We then present a counterexample-guided algorithm for solving this problem.
The algorithm alternates between choosing a candidate polyhedral function
based on a finite set of counterexamples
and verifying whether the candidate satisfies the Lyapunov conditions.
If the verification fails, we find a new counterexample that is added to our set.
We prove that if the algorithm terminates,
it discovers a valid Lyapunov function
or concludes that no such Lyapunov function exists.
However, our initial algorithm can be non-terminating.
We modify our algorithm to provide a terminating version based on the so-called cutting-plane argument from nonsmooth optimization.
We demonstrate our algorithm on numerical examples.
\end{abstract}


\section{Introduction}\label{sec-introduction}

{

Stability analysis of dynamical systems is of great importance in control theory,
since many controllers seek to stabilize a system to a reference state or trajectory \cite{khalil2002nonlinear,sontag1998mathematical}.
Lyapunov methods are commonly used to prove stability of dynamical systems using a
positive-definite function, called a Lyapunov function,
whose value strictly decreases along all trajectories of the systems, except at the equilibrium point.

In this paper, we focus on the stability analysis of hybrid linear systems.
These are systems with multiple modes, each with a different continuous-time linear dynamics
and state-dependent switching between the modes.
These systems appear naturally in a wide range of applications,
or as abstractions of more complex dynamical systems \cite{goebel2012hybrid}.
To study the stability of these systems,
we rely on polyhedral functions, which are convex piecewise linear functions.
This class of functions is interesting for Lyapunov analysis
because the expressiveness of the function can be easily modulated
by adjusting the number of linear pieces.
Furthermore, for a large class of hybrid systems (including switched linear systems),
if the system is stable, then a polyhedral Lyapunov function is guaranteed to exist for the system \cite{sun2011stability}.

In this paper, we first establish that the problem of deciding
whether there exists a polyhedral Lyapunov function with given number of linear pieces
for a given hybrid linear system is NP-hard.
Despite the important body of work on polyhedral Lyapunov functions,
the study of the algorithmic complexity of the underlying problem
when the number of linear pieces is fixed seems to be elusive.
We fill this gap, thereby providing an insight on the complexity of
algorithms meant to solve the problem with guarantees.

Next, we introduce a counterexample-guided algorithm
to compute polyhedral Lyapunov functions for hybrid linear systems.
Our approach alternates between choosing a candidate Lyapunov function
based on a finite set of counterexamples
and verifying whether the candidate satisfies the Lyapunov conditions.
If the verification fails, we find a new counterexample
that is added to our set. In effect, this new counterexample removes
the current candidate (and many others) from further consideration.
The learning--verification process is repeated
until either a valid Lyapunov function is found
or no such function is shown to exist for the system.
The key challenges in this approach are twofold.
First, there are multiple ways of removing candidates using a counterexample
while keeping the set of remaining candidates convex.
Our approach uses a tree-based search algorithm to explore these possibilities.
Second, the algorithm as presented thus far would be non-terminating.
We modify our approach to guarantee termination using the \emph{cutting-plane argument},
wherein a careful choice of the candidate and the pruning of branches of the tree
based on the inner radius of the feasible set of candidates
are used to ensure termination
and get upper bounds on the complexity of the algorithm.
The output of the modified algorithm is
either a polyhedral Lyapunov function for the system,
or the conclusion that no ``$\epsilon$-robust'' function
exists for the system, where $\epsilon > 0 $ is an input to the algorithm.
In effect, we conclude that if a polyhedral Lyapunov function were to exist for the system,
then perturbing its coefficients by more than $\epsilon$ in the $L_2$-norm will cause
the function not to be a Lyapunov function.

\par\noindent{\itshape Comparison with the literature.}
Blondel et al.\ provide a comprehensive introduction to the computational
complexity of numerous decision problems related to control and stability of dynamical systems \cite{blondel2000asurvey}.
The problem of deciding stability of
switched linear systems (a subclass of hybrid linear systems) is shown to be undecidable.
The complexity of deciding whether a dynamical system
admits a polyhedral Lyapunov function with a given number of linear pieces
seems to not have been addressed so far.
The same problem without the bound on the number of pieces
is closely related to the problem of stability verification
using general convex Lyapunov functions (see, e.g., \cite{sun2011stability}).

The problem of synthesizing Lyapunov functions
for dynamical systems has received a lot of attention in the literature \cite{khalil2002nonlinear,sun2011stability}.
Different classes of functions have been considered for that purpose,
resulting in various trade-offs between expressiveness and computational burden.
Polynomial functions have proved useful for stability analysis \cite{parrilo2000structured},
but they can be too conservative for some hybrid systems,
as shown in Example~\ref{exa-running}.
Piecewise polynomial Lyapunov functions allow to reduce the conservatism \cite{johansson1998computation},
at the cost of introducing hyperparameters and/or increasing the complexity.
The computation of polyhedral Lyapunov functions has also received attention,
for instance, using optimization-based \cite{polanski2000onabsolute,ambrosino2012aconvex,kousoulidis2021polyhedral}
and set-theoretic methods \cite{blanchini2015settheoretic,guglielmi2017polytope}.
One drawback of these approaches is that they generally lack guarantees of convergence
(to a valid Lyapunov function), or guarantees of termination and complexity, or involve many hyperparameters.
By contrast, the only parameters in our approach are the number of linear pieces and the accuracy $\epsilon$,
and the algorithm always terminates and finds a polyhedral Lyapunov function,
or ensures that no $\epsilon$-robust such function exists for the system.



The idea of learning Lyapunov functions from data
and verifying the result has been explored by many researchers,
starting with \cite{topcu2008local}, whose approach relies on random sampling of states,
learning a candidate Lyapunov function from the samples and verifying the result.
Our approach falls more precisely into the category of
Counterexample-Guided Inductive Synthesis (CeGIS) techniques,
which consist in iteratively adding counterexamples from the verification step.
These techniques have been used in a wide range of control problems
\cite{kapinski2014simulationguided,chang2019neural,ravanbakhsh2019learning,%
poonawala2019stability,ahmed2020automated,abate2021formal,dai2021lyapunovstable}.
Particularly related to our work, \cite{dai2021lyapunovstable}
searches for neural-network Lyapunov functions with ReLU activation functions,
using MILP for the verification;
and \cite{polanski2000onabsolute} searches for polyhedral Lyapunov functions,
using Linear Programming for the verification and updating the domain of the linear pieces from the counterexamples.
However, both approaches lack guarantees of convergence and complexity.
In a recent work \cite{berger2022counterexampleguided}, we also studied
counterexample-guided approaches to compute polyhedral Lyapunov functions
without a bound on the number of linear pieces.

Prabhakar and Soto present a CeGIS approach for verifying
stability of polyhedral hybrid systems with piecewise constant differential inclusions
by using counterexamples to drive a partitioning of the
state-space \cite{prabhakar2016counterexample}.
The partitioning is used to compute a weighted graph whose cycles provide evidence of stability if the
product of weights for each cycle is less than one.
Failing this, we conclude potentially unstable behavior.
Prabhakar and Soto's approach is distinct from our approach in two important ways:
(a) it  does not seek to compute a Lyapunov function but instead focuses on building a finite graph abstracting the
behavior of the system and
(b) they handle polyhedral inclusions, which are a different type of dynamics from those considered here.

}

\par\noindent{\itshape Organization.}
Section~\ref{sec-problem-setting} introduces the problem of interest
and preliminary concepts related to Lyapunov analysis and polyhedral functions.
In Section~\ref{sec-description-process}, we describe the algorithm and its properties.
Finally, in Section~\ref{sec-numerical-example},
we demonstrate the applicability  on numerical examples.

\par\noindent\emph{Notation.}
$\Ne$ denotes the set of nonnegative integers.
We let $\Ne_*=\Ne\setminus\{0\}$ and $\Re^d_*=\Re^d\setminus\{0\}$.
For $n\in\Ne$, we let $[n]=\{1,\ldots,n\}$.

\section{Preliminaries and Problem Description }\label{sec-problem-setting}

\subsection{Hybrid linear systems}

Hybrid linear systems consist of a finite set of \emph{modes} $Q$,
such that for each mode $q\in Q$, there is a closed polyhedral cone $\calH_q\subseteq\Re^d$
called the \emph{domain}, and a \emph{flow} matrix $A_q\in\Re^\dxd$.
Let $\calF:\Re^d\rightrightarrows\Re^d$ be the set-valued function
defined by $\calF(x)=\{A_qx : q\in Q,\,x\in \calH_q\}$.
For the purpose of our analysis, $\calF$ describes completely the dynamics of the system.
Therefore, in the following, we will refer to the system as \emph{system $\calF$}.
A function $\xi:\Re_{\geq0}\to\Re^d$ is a \emph{trajectory} of system $\calF$
if $\xi$ is absolutely continuous and satisfies the differential inclusion
$\xi'(t)\in\calF(\xi(t))$ for almost all $t\in\Re_{\geq0}$ \cite{aubin1984differential}.

\begin{remark}\label{rem-update}
Note that the hybrid system considered above
is a particular instance of hybrid linear systems,
since there is no update of the continuous state at mode switches \cite{goebel2012hybrid}.
Extensions of our work to a more general class of hybrid systems will be considered in the future work.
\end{remark}

\subsection{Polyhedral Lyapunov functions}


A \emph{polyhedral function}  $V:\Re^d\to\Re$ can be described as the pointwise maximum of a finite set of linear functions,
i.e., $V(x)=\max_{i\in[m]}\,c_i^\top x$ where $m\in\Ne_{>0}$ and $\{c_i\}_{i=1}^m\subseteq\Re^d$.


To be a Lyapunov function for system $\calF$,
a piecewise smooth function $V:\Re^d\to\Re$ must (i) be positive everywhere except at the origin,
and (ii) strictly decrease along all trajectories of the system
not starting at the origin \cite{khalil2002nonlinear}.
In the case of polyhedral functions, a sufficient condition for being a Lyapunov function
and thereby ensuring stability of the system,
is as follows \cite[Proposition~3]{della2019smooth}:

\begin{proposition}\label{pro-piecewise-linear-lyapunov}
A polyhedral function $V$ with linear pieces $\{c_i\}_{i=1}^m$
is a Lyapunov function for system $\calF$ if
\begin{itemize}[\setlength{\labelwidth}{4mm}]
\item[\upshape(C1)] for all $x\in\Re^d_*$, there is $i\in[m]$ such that $c_i^\top x>0$;
\item[\upshape(C2)] for all $x\in\Re^d_*$, $v\in\calF(x)$
and $i\in[m]$ with $c_i^\top x=V(x)$, it holds that $c_i^\top v<0$.
\end{itemize}\vskip0pt
\end{proposition}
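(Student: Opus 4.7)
The plan is to verify the two defining properties of a Lyapunov function from the statement above: (i) positive definiteness and (ii) strict decrease along all nontrivial trajectories. Positive definiteness is essentially immediate: $V(0) = \max_i c_i^\top 0 = 0$, and for $x \in \Re^d_*$ condition (C1) supplies an index $i$ with $c_i^\top x > 0$, hence $V(x) \geq c_i^\top x > 0$. So this part of the proof is just unpacking the definition of the pointwise maximum.

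The substantive part is strict decrease along trajectories. First I would record that, as a pointwise maximum of finitely many linear functions, $V$ is convex and globally Lipschitz, so for any absolutely continuous trajectory $\xi$ the composition $t \mapsto V(\xi(t))$ is also absolutely continuous and therefore differentiable almost everywhere. Next I would compute the one-sided directional derivative of $V$: by standard convex analysis (Danskin's formula), for every $x \in \Re^d$ and $v \in \Re^d$,
\[
V'(x;v) \;=\; \lim_{h \downarrow 0} \frac{V(x+hv) - V(x)}{h} \;=\; \max_{i \in I(x)} c_i^\top v,
\]
where $I(x) = \{\, i \in [m] : c_i^\top x = V(x)\,\}$ is the active index set. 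The justification uses the fact that $I(x+hv) \subseteq I(x)$ for all sufficiently small $h>0$, which is a direct consequence of $V$ being a max of finitely many linear functions.

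Then I would combine these ingredients. At almost every $t$ with $\xi(t) \neq 0$, the derivative $\xi'(t)$ exists and lies in $\calF(\xi(t))$, so the chain rule for the directional derivative of a convex function along an absolutely continuous curve yields
\[
\tfrac{\diff}{\diff t} V(\xi(t)) \;=\; V'(\xi(t);\xi'(t)) \;=\; \max_{i \in I(\xi(t))} c_i^\top \xi'(t).
\]
By (C2) applied with $x = \xi(t)$ and $v = \xi'(t) \in \calF(\xi(t))$, every term in this max is strictly negative, hence the derivative is strictly negative almost everywhere on any sub-interval where $\xi$ does not pass through the origin. Integrating the absolutely continuous function $V \circ \xi$ over $[t_0,t_1]$ then gives $V(\xi(t_1)) < V(\xi(t_0))$, establishing strict decrease.

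The main obstacle I expect is the chain-rule step: verifying rigorously that for a convex (hence locally Lipschitz) function $V$ and an absolutely continuous curve $\xi$, the derivative of $V \circ \xi$ at a differentiability point $t$ is exactly $V'(\xi(t); \xi'(t))$, rather than merely being bounded above by it. This is classical but nonstandard enough that I would cite it (e.g.\ via the Clarke subdifferential or directly via the one-sided expansion $V(\xi(t+h)) = V(\xi(t)) + h\,V'(\xi(t);\xi'(t)) + o(h)$, using Lipschitz continuity of $V$ to absorb the $o(h)$ error coming from $\xi(t+h) = \xi(t) + h\xi'(t) + o(h)$). Everything else in the argument is routine once the directional derivative identity is in hand.
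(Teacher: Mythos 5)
Your argument is correct. Note that the paper does not prove this proposition at all---it is imported verbatim from the literature (cited as Proposition~3 of della Rossa et al.)---so there is no internal proof to compare against; your write-up supplies a self-contained justification of exactly the two properties the paper uses to define a Lyapunov function (positivity away from the origin, strict decrease along trajectories). The two ingredients you isolate are the right ones: the exact directional-derivative formula $V'(x;v)=\max_{i\in I(x)}c_i^\top v$ for a finite max of linear functions, and the chain-rule identity $(V\circ\xi)'(t)=V'(\xi(t);\xi'(t))$ at a.e.\ $t$, which you correctly reduce to Lipschitz continuity of $V$ plus the first-order expansion of $\xi$; combined with absolute continuity of $V\circ\xi$ and integration, this gives strict decrease on any interval over which $\xi$ avoids the origin. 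The only point worth polishing is the edge case of a trajectory that reaches the origin in finite time: there $V\circ\xi$ is constant afterwards, and the standard reading of ``strictly decreasing along trajectories not starting at the origin'' (decrease until the origin is reached, which your monotonicity argument shows cannot be left again) is what the cited result intends---this is a definitional convention, not a gap in your reasoning.
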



In other words, a set a vectors $\{c_i\}_{i=1}^m$
represents a polyhedral Lyapunov function for system $\calF$ if
\begin{equation}\label{eq-constraint-primal}
\begin{array}{@{}l@{}}
(\forall\,x\in\Re^d_*)\ (\forall\,i\in[m]) \;\; \text{either} \;\;
(\exists\,j\in[m])\ c_i^\top x<c_j^\top x \\
\quad\text{or} \;\; [\,c_i^\top x > 0 \;\;\text{and}\;\; (\forall\,v\in\calF(x))\ c_i^\top v<0\,].
\end{array}
\end{equation}

The condition in \eqref{eq-constraint-primal} is difficult
to enforce because (i) it involves an infinite set of constraints,
and (ii) each constraint involves a disjunction of $m$ clauses%
\footnote{$m-1$ clauses come from the ``$\exists\,j$''
and one clause comes from the condition between brackets ``$[\ldots]$''.}.
In the next section, we describe a tree-based counterexample-guided algorithm to tackle this problem.
The idea is to (i) consider only a finite set of constraints, obtained from the counterexamples,
and (ii) decompose the disjunction by introducing a branch in the tree for each clause.

We define the \lyapFun problem as that of checking the existence of a polyhedral function
satisfying \eqref{eq-constraint-primal}, given a hybrid linear system $\calF$ and a number of pieces $m\geq2$.

\begin{theorem}\label{thm-np-hard}
The \lyapFun problem is NP-hard, even for $m=2$.
\end{theorem}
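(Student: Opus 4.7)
The plan is to reduce from 3-SAT. Given an instance $\phi$ with $n$ variables and $k$ clauses, I would construct in polynomial time a hybrid linear system $\calF_\phi$ in some dimension $d=\poly(n,k)$ such that $\calF_\phi$ admits a two-piece polyhedral Lyapunov function if and only if $\phi$ is satisfiable. The guiding observation is that with $m=2$ the constraint \eqref{eq-constraint-primal} at each nonzero $x$ imposes a $2$-way disjunction on the unknowns $(c_1,c_2)$: either $c_1^\top x>c_2^\top x$ (and then $c_1^\top A_qx<0$ is required at every $q$ with $x\in\calH_q$), or $c_2^\top x>c_1^\top x$ (with the symmetric requirement). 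Finding $(c_1,c_2)\in(\Re^d)^2$ that jointly satisfies a large collection of such disjunctive linear inequalities is structurally a Boolean satisfiability problem, and the reduction exploits this correspondence directly.

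First, I would encode a truth assignment by the sign pattern of $(c_1,c_2)$ on a dedicated block of $n$ coordinates. A small family of ``book-keeping'' modes with one-dimensional-cone domains and rank-one flow matrices pins down these signs, so that the pairs $(c_1,c_2)$ feasible for the book-keeping sub-system are, up to positive scaling, in bijection with truth assignments $a:\{y_1,\ldots,y_n\}\to\{\true,\false\}$. Next, for each clause $C=\ell_1\vee\ell_2\vee\ell_3$ I would introduce a clause gadget consisting of a few coordinated modes whose domains partition a common polyhedral cone: on each sub-cone exactly one of $c_1,c_2$ is forced to dominate, and the flow matrix is chosen so that the decrease inequality for the dominant $c_i$ holds precisely when at least one of $\ell_1,\ell_2,\ell_3$ is set to $\true$ by the sign pattern fixed in the book-keeping step. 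Finally, I would add a few auxiliary modes whose sole purpose is to secure condition (C1) on the cone of interest and to rule out degenerate solutions where $c_1$ and $c_2$ are colinear.

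Correctness is verified in both directions. A satisfying assignment of $\phi$ yields, via the prescribed sign pattern, vectors $(c_1,c_2)$ that satisfy every mode's constraint of $\calF_\phi$ by construction, certifying a two-piece polyhedral Lyapunov function. Conversely, any pair $(c_1,c_2)$ satisfying \eqref{eq-constraint-primal} for $\calF_\phi$ determines a truth assignment through its sign pattern on the variable block; the clause gadget then forces this assignment to satisfy every clause, hence $\phi$ is satisfiable. Since the construction has size polynomial in $n+k$ and 3-SAT is NP-hard, the theorem follows.

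The main obstacle is the clause gadget: the ambient geometric disjunction is intrinsically only $2$-way, whereas a 3-SAT clause contains a $3$-way disjunction. Encoding the latter requires a carefully coordinated triple of modes, using a few auxiliary coordinates per clause that arrange three sub-cones in the clause's domain so that the ``which of $c_1,c_2$ dominates'' pattern across the three sub-cones is forced to witness satisfaction of at least one literal. A secondary subtlety is condition (C1), whose global-positivity requirement in high dimension is delicate with only $m=2$ pieces; I would circumvent this by placing all of the gadget geometry inside a sub-cone on which the chosen pair $(c_1,c_2)$ already certifies positivity, and extending the dynamics to complementary directions by dummy modes whose Lyapunov constraints are vacuous for every valid assignment.
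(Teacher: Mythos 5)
Your proposal is a plan rather than a proof, and the piece it leaves open is exactly the heart of the reduction. The clause gadget is never constructed: you give no domains, no flow matrices, and no argument for why the decrease condition $c_i^\top A_q x<0$ for the dominant piece would hold ``precisely when at least one of $\ell_1,\ell_2,\ell_3$ is true.'' In the paper this is achieved by a concrete threshold mechanism: the variable modes force $\fbar_k\gbar_k<0$ together with $\lvert\fbar_k\rvert>\ftilde$ and $\lvert\gbar_k\rvert>\gtilde$, and the clause mode (dynamics $\dot{\bar x}_k=\sigma_{l,k}\lvert\hat x\rvert$, $\dot{\tilde x}=-3\lvert\hat x\rvert$) turns the decrease condition into $\sum_k\sigma_{l,k}\fbar_k-3\ftilde<0$, which in combination with the variable-gadget inequalities can only hold if some literal of the clause is true under the sign pattern of $f$. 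Acknowledging that a $3$-way disjunction must be squeezed into a $2$-way geometric disjunction and then asserting that ``a carefully coordinated triple of modes'' will do it is naming the obstacle, not overcoming it.

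There is also a structural reason why your choice of source problem (plain 3-SAT) is likely to break the forward direction as you sketched it. With $m=2$, condition \eqref{eq-constraint-primal} is invariant under swapping $c_1\leftrightarrow c_2$, and in the natural sign encoding the two pieces carry complementary assignments. On any gadget cone that constrains both pieces (which happens automatically whenever the domain contains opposite directions, as in the paper's construction), the clause condition is imposed on the assignment \emph{and} on its complement, i.e., it encodes ``at least one literal true and at least one literal false.'' Consequently, a satisfying assignment that makes all three literals of some clause true need not yield a feasible pair $(c_1,c_2)$, so ``$\phi$ satisfiable $\Rightarrow$ Lyapunov exists'' fails for the symmetric construction you describe. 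The paper resolves this by reducing from NAE-SAT3, whose satisfaction criterion is complement-invariant and thus matches the built-in symmetry of the two-piece function. To salvage a reduction from plain 3-SAT you would need an explicit symmetry-breaking device (e.g., one-sided cone domains tied to an auxiliary coordinate whose sign distinguishes the pieces), and your proposal supplies neither that device nor the verification that it works. As it stands, the argument has a genuine gap.
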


\begin{proof}
\ifextended%
See Appendix~\ref{ssec-app-proof-thm-np-hard}.
\else%
See the extended version \cite{berger2022learning}.
\fi
\end{proof}

To conclude this section, let us introduce an example of hybrid linear systems
that we will use throughout the paper to illustrate the approach.

\begin{example}[Running illustrative example]\label{exa-running}
Consider the hybrid linear system with $4$ modes, i.e., $Q=[4]$,
domains corresponding to the $4$ quadrants of the 2D plane,
namely, $H_1=\Re_{\geq0}\times\Re_{\geq0}$, $H_2=\Re_{\geq0}\times\Re_{\leq0}$,
$H_3=\Re_{\leq0}\times\Re_{\leq0}$ and $H_4=\Re_{\leq0}\times\Re_{\geq0}$,
and flow matrices:
\[
\begin{array}{c}
A_1 = \left[\begin{array}{cc} \frac12 & 1 \\ -1 & \frac{-3}2 \end{array}\right], \quad
A_2 = A_4 = \left[\begin{array}{cc} -1 & 1 \\ -1 & 1 \end{array}\right], \\
A_3 = \left[\begin{array}{cc} 1 & 1 \\ -1 & -1 \end{array}\right] + \left[\begin{array}{cc} 0.01 & 0 \\ 0 & 0.01 \end{array}\right].
\end{array}
\]
This system can be seen as a piecewise linear damped oscillator.
The vector field and a trajectory starting from $x=[1.5,0]^\top$
are represented in Fig.~\ref{fig-illustrative-field}.
The system does not admit a polynomial Lyapunov function
\ifextended%
(a proof is provided in Appendix~\ref{ssec-app-exa-running}).
\else%
(a proof is provided in the extended version \cite{berger2022learning}).
\fi
Nevertheless, as we will see throughout the paper,
we can compute a polyhedral Lyapunov function for the system,
thereby ensuring that it is asymptotically stable.
\end{example}

\begin{figure}
\centering
\includegraphics[width=0.6\linewidth]{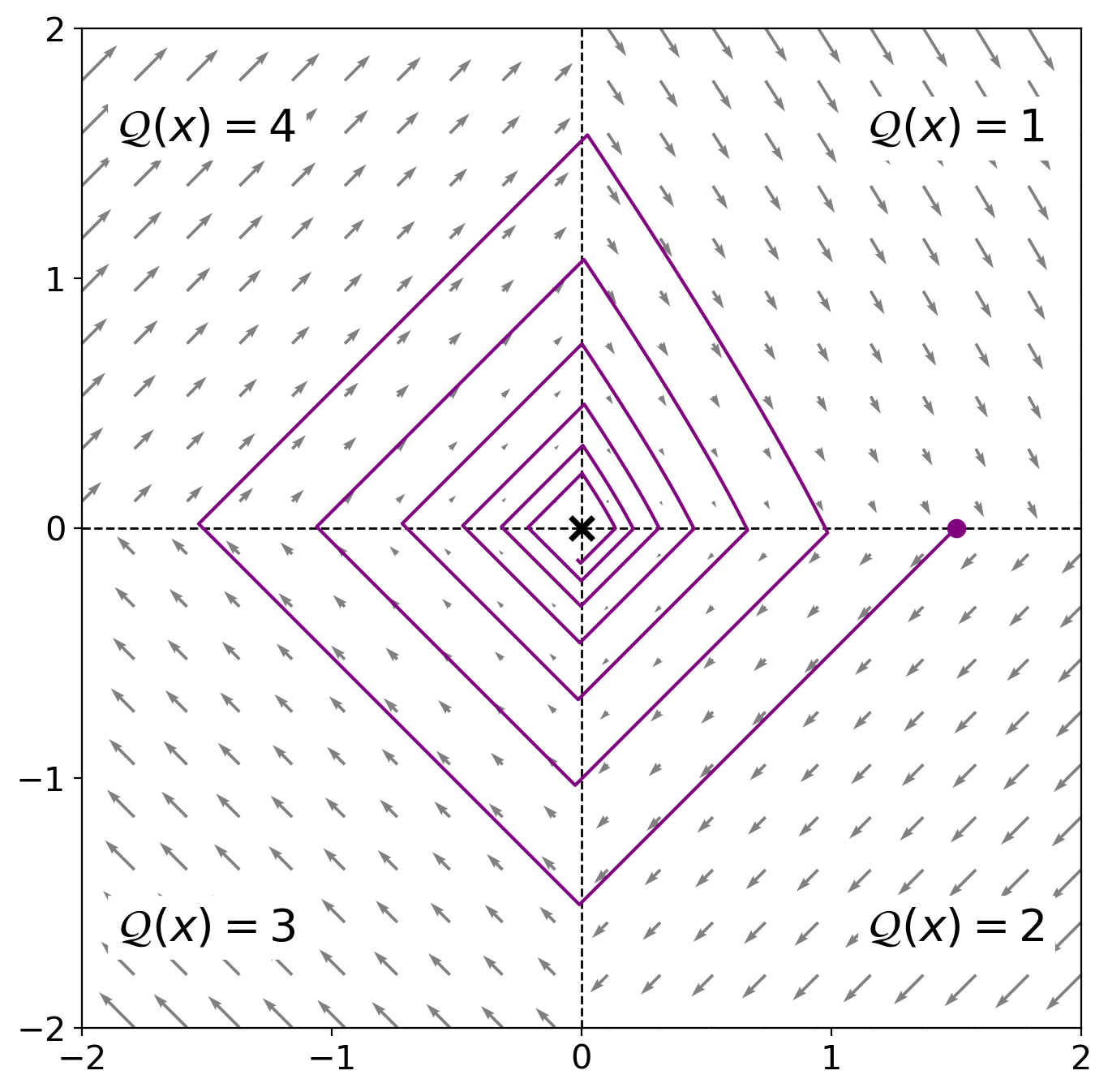}
\caption{Hybrid linear system described in Example~\ref{exa-running}.
The vector field is represented by gray arrows
and a sample trajectory starting from $x=[1.5,0]^\top$ is represented in purple.}
\label{fig-illustrative-field}
\end{figure}

\section{Counterexample-guided Search}\label{sec-description-process}

{

We present a tree-based counterexample-guided algorithm
to find a polyhedral function satisfying \eqref{eq-constraint-primal}.
Our approach maintains a \emph{tree}, wherein each node is associated with a set of constraints.
At each step, the algorithm picks a leaf of the tree and expands it as follows:
\begin{enumerate}
\item \emph{Learning:} Solve a linear program to obtain a candidate polyhedral Lyapunov function
compatible with the associated set of constraints $Y$.
\item \emph{Verification:} The candidate solution is then fed to a verifier
that checks whether it satisfies \eqref{eq-constraint-primal}.
If this fails, the verifier provides a pair $(x,i)\in\Re_*^d\times[m]$,
called a \emph{counterexample},
for which \eqref{eq-constraint-primal} is not satisfied.
\item \emph{Expansion:} If a counterexample is found,
the algorithm creates $m$ child nodes in the tree.
Each child node is associated with a set of constraints obtained by adding to $Y$
one of the clauses in \eqref{eq-constraint-primal} for the counterexample $(x,i)$
(see below for details).
\end{enumerate}

The sets of constraints associated to each node
are sets of triples of the form $(x,i,j)\in\Re_*^d\times[m]\times[m]$
wherein $(x,i)$ are obtained from the counterexamples,
and $j$ is an index indicating which clause of
\eqref{eq-constraint-primal} must be enforced at $(x,i)$.
More precisely, if $i\neq j$, we enforce the clause
\begin{equation}\label{eq-constraint-smaller}
c_i^\top x<c_j^\top x,
\end{equation}
and if $i=j$, we enforce the clause
\begin{equation}\label{eq-constraint-deriv}
c_i^\top x > 0 \;\;\text{and}\;\; (\forall\,v\in\calF(x))\ c_i^\top\!v<0.
\end{equation}

}

Our approach systematically explores this tree
and stops when one leaf of the tree yields a valid Lyapunov function.
Under some modifications of the basic scheme above,
we prove that the depth of this tree is bounded.
This yields an upper bound on the running-time complexity of the algorithm.

The section is organized as follows: first, we describe the learning phase,
then the verification phase, and finally the expansion phase
which yields the overall recursive process.

\subsection{Learning Phase}\label{ssec-learning}

Given a set of constraints $Y\subseteq\Re^d_*\times[m]\times[m]$,
we aim to find vectors $(c_i)_{i=1}^m\subseteq\Re^d$
compatible with $Y$ according to conditions \eqref{eq-constraint-smaller}--\eqref{eq-constraint-deriv}.
Therefore, we solve the following feasibility problem:
{\allowdisplaybreaks\begin{subequations}\label{eq-optim-learner}
\begin{align}
\text{find}\quad & c_i\in[-1,1]^d, \quad i\in[m] \\
\text{s.t.}\quad &
\left\{\begin{array}{l}
\text{\eqref{eq-constraint-smaller} if $i\neq j$} \\
\text{\eqref{eq-constraint-deriv} if $i=j$}
\end{array}\right., \quad\forall\,(x,i,j)\in Y,\label{eq-optim-learner-condition}.
\end{align}
\end{subequations}}%
When $Y$ is finite, \eqref{eq-optim-learner} is a Linear Program
and thus can be solved efficiently \cite{nesterov1994interiorpoint,bental2001lectures}.
We denote by $S(Y)$ the set of feasible solutions of \eqref{eq-optim-verifier-pos}.
A set of vectors $(c_i)_{i=1}^m\subseteq\Re^d$ is thus
a candidate solution compatible with $Y$ iff $(c_i)_{i=1}^m\in S(Y)$.
Note that $S(Y)$ is a convex set.

\subsection{Verification Phase}\label{ssec-verification}

We verify whether a candidate solution $(c_i)_{i=1}^m$ computed in the learning phase
provides a valid Lyapunov function for system $\calF$,
by checking whether it satisfies (C1) and (C2) in Proposition~\ref{pro-piecewise-linear-lyapunov}.
If this is not the case, we generates a pair
$(x,i)\in\Re^d\times[m]$, called a \emph{counterexample},
at which \eqref{eq-constraint-primal} is not satisfied.

First, we verify that~(C1) holds for the candidate solution
by searching for $x\in\Re_*^d$ such that $V(x)\leq0$.
Without loss of generality, we can set $x$ to be on the boundary of $[-1,1]^d$.
Therefore, we solve $2d$ Linear Programs, for each facet $F$ of $[-1,1]^d$:
{\allowdisplaybreaks\begin{subequations}\label{eq-optim-verifier-pos}
\begin{align}
\text{find}\quad & x\in\Re^d \\
\text{s.t.}\quad & c_i^\top x\leq 0, \quad\forall\,i\in[m],\label{eq-optim-verifier-pos-neg} \\
& x\in F. \label{eq-optim-verifier-pos-norm}
\end{align}
\end{subequations}}%
If for all facet $F$, \eqref{eq-optim-verifier-pos} is infeasible, then (C1) holds.
However, if there is a facet $F$ for which
\eqref{eq-optim-verifier-pos} has a feasible solution $x$,
then it holds that $x\neq0$ and $V(x)\leq0$.
We find index $i\in[m]$ with $c_i^\top x=V(x)$, yielding a counterexample $(x,i)$.

If (C1) is shown to hold, we next verify that (C2) holds too.
Therefore, we solve $m\lvert Q\rvert$ LPs, for each $i\in[m]$ and $q\in Q$,
{\allowdisplaybreaks\begin{subequations}\label{eq-optim-verifier-lie}
\begin{align}
\text{find}\quad & x\in\Re^d \\
\text{s.t.}\quad & c_i^\top x = 1 \geq c_j^\top x, \quad\forall\,j\in[m],\label{eq-optim-verifier-lie-max} \\
& x \in \calH_q, \label{eq-optim-verifier-lie-domain} \\
& c_i^\top\!A_qx \geq 0. \label{eq-optim-verifier-lie-deriv}
\end{align}
\end{subequations}}%
If for all $i,q$, \eqref{eq-optim-verifier-lie} is infeasible, then (C2) holds.
However, if there are $i,q$ for which
\eqref{eq-optim-verifier-lie} has a feasible solution $x$,
then it holds that $x\neq 0$, $c_i^\top x=V(x)$
and $c_i^\top v\geq0$ wherein $v=A_qx\in\calF(x)$,
so that $(x,i)$ is a counterexample.

\begin{example}[Running illustrative example]\label{exa-running-verifier}
Consider the $4$-piece polyhedral function $V$
whose $1$-sublevel set is represented in Fig.~\ref{fig-illustrative-verifier}-left (in yellow).
The point $x=[-1,0]^\top$ (in red) provides
a direction in which $\alpha x$ is in the $1$-sublevel set of $V$ for all $\alpha\geq0$.
This implies that $V(x)\leq0$, so that $x$ is a feasible solution to \eqref{eq-optim-verifier-pos}.

Now, consider the system of Example~\ref{exa-running}
and the $10$-piece polyhedral function $V$
whose $1$-sublevel set is represented
in Fig.~\ref{fig-illustrative-verifier}-right (in yellow).
For $q=4$, \eqref{eq-optim-verifier-lie} has a feasible solution $x=[0,1.2]^\top$ (red dot).
The flow direction of the system from $x$ is represented
in Fig.~\ref{fig-illustrative-verifier}-right (red line).
We see that the flow direction points towards
the exterior of the $1$-sublevel set of $V$.
\end{example}

\begin{figure}
\centering
\includegraphics[width=\linewidth]{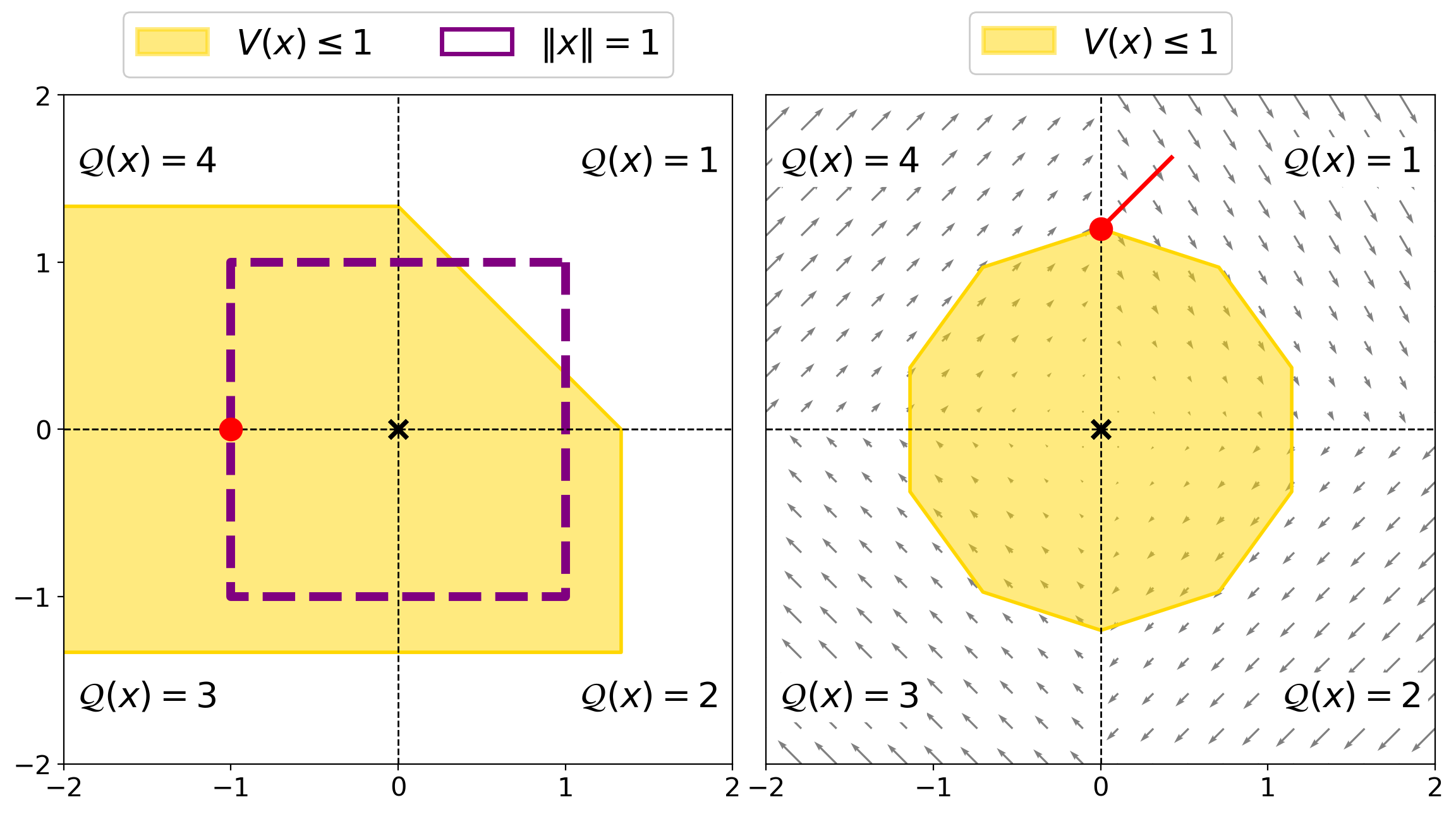}
\caption{\emph{Left:} A point $x\in\Re^d_*$ (in red) at which $V(x)\leq0$.
\emph{Right:} A point $x\in\Re^d_*$ (in red) at which
(C2) in Proposition~\ref{pro-piecewise-linear-lyapunov} is not satisfied
for the system described in Example~\ref{exa-running}.}
\label{fig-illustrative-verifier}
\end{figure}

\subsection{Expansion and Overall Algorithm}\label{ssec-process}

The algorithm organizes the set of constraints as a tree with the following properties:
(a) each node $u$ is associated with a set of constraints $Y(u)$
and thus with a convex set of candidates $S(u)\coloneqq S(Y(u))$;
(b) the root of the tree is associated with $Y(\text{root})=\emptyset$;
and (c) each node $u$ has $m$ children, $u_1,\ldots,u_m$,
with associated sets of constraints $Y(u_j)=Y(u)\uplus\{(x,i,j)\}$,
wherein $(x,i)$ is the counterexample found during the verification
of the candidate at node $u$.
Each leaf of the tree is marked as \textsc{unexplored} or \textsc{infeasible}.

Each iteration of our algorithm is as follows:
\begin{enumerate}
\item Choose an \textsc{unexplored} leaf $u$.
\item Find a candidate $(c_i)_{i=1}^m\in S(u)$.
\item Verify the candidate using \eqref{eq-optim-verifier-pos} and \eqref{eq-optim-verifier-lie}.
\item If we find a counterexample $(x,i)$,
we add $m$ children, $u_1,\ldots,u_m$, to $u$ wherein $u_j$ is \textsc{unexplored} and
has set of constraints $Y(u_j)=Y(u)\uplus\{(x,i,j)\}$.
\end{enumerate}

\begin{algorithm}[t]
\DontPrintSemicolon
\caption{Find Polyhedral Lyapunov Function.}\label{fig-tree-exploration-pseudocode}
\KwData{system $\calF$, number of pieces $m$.}
\KwResult{$(c_i)_{i=1}^m$: polyhedral Lyapunov function coefficients.}
Initial tree is set to a root with $Y(\text{root})=\emptyset$.\;
\While{Tree has \textsc{unexplored} leaves}{\label{nl-loop-head}
    Choose \textsc{unexplored} leaf $u$. \;
    \lIf{$S(u)$ is empty}{mark $u$ as \textsc{Infeasible}.}
    \Else{
        Choose $(c_i)_{i=1}^m \in S(u)$ \label{nl-choose-candidate}\;
        Verify $(c_i)_{i=1}^m$ using \eqref{eq-optim-verifier-pos} and \eqref{eq-optim-verifier-lie}.\;
        \lIf{$(c_i)_{i=1}^m$ is Lyapunov}{return $(c_i)_{i=1}^m$}\label{nl-test-passed}
        \Else{
            Let $(x, i)$ be the counterexample. \label{nl-counterex}\;
            Add $m$ children $u_1,\ldots,u_m$ to $u$.\;
            \For{$j=1,\ldots,m$}{
	            $Y(u_j)\gets Y(u)\cup\{(x,i,j)\}$. \label{nl-child-scenario}\;
	            Mark $u_j$ as \textsc{unexplored}.\;
            }
        }
    }
}
Return ``No Lyapunov Found''\label{nl-no-lyap-found}\;
\end{algorithm}


Alg.~\ref{fig-tree-exploration-pseudocode} shows the pseudo-code.
We will now establish some properties of Alg.~\ref{fig-tree-exploration-pseudocode}.
Let $(c_i)_{i=1}^m$ be a candidate chosen for some leaf $u$ (Line~\ref{nl-choose-candidate})
and $(x,i)$ be a counterexample that was found for this candidate (Line~\ref{nl-counterex}).
Consider $S(u_j)$ for some child $u_j$ of $u$ (Line~\ref{nl-child-scenario}).

\begin{proposition}\label{prop-candidate-eliminated}
$S(u_j)\subseteq S(u)$ and $(c_i)_{i=1}^m\not\in S(u_j)$.
\end{proposition}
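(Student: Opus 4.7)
The plan is to prove the two parts separately. The first inclusion $S(u_j) \subseteq S(u)$ is essentially a monotonicity statement for $S$: since $Y(u_j) = Y(u) \cup \{(x,i,j)\}$ contains strictly more constraints than $Y(u)$, any $(c_k)_{k=1}^m$ satisfying the constraints in $Y(u_j)$ automatically satisfies those in $Y(u)$. So this part follows immediately from the definition of $S(\cdot)$ as the feasibility set of the learner LP \eqref{eq-optim-learner}.

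For the second part, I need to argue that $(c_k)_{k=1}^m$ violates the single extra constraint $(x,i,j)$ for \emph{every} child index $j \in [m]$. The key fact to unpack is what ``$(x,i)$ is a counterexample'' means: it was returned either by \eqref{eq-optim-verifier-pos} or by \eqref{eq-optim-verifier-lie}, and in both cases $c_i^\top x = V(x) = \max_{k} c_k^\top x$, so $c_k^\top x \leq c_i^\top x$ for all $k$. Hence for any $j \neq i$ the clause \eqref{eq-constraint-smaller}, $c_i^\top x < c_j^\top x$, fails.

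It remains to handle $j = i$, where the required clause is \eqref{eq-constraint-deriv}. Here I split into the two verifier cases. If $(x,i)$ came from \eqref{eq-optim-verifier-pos}, then $c_i^\top x = V(x) \leq 0$, so the positivity part of \eqref{eq-constraint-deriv} already fails. If instead it came from \eqref{eq-optim-verifier-lie}, then the index $q$ returned by the verifier certifies $x \in \calH_q$ and $c_i^\top A_q x \geq 0$, so taking $v = A_q x \in \calF(x)$ shows the derivative part of \eqref{eq-constraint-deriv} fails. Either way, the added clause is violated by $(c_k)_{k=1}^m$, giving $(c_k)_{k=1}^m \notin S(u_j)$.

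The proof is essentially bookkeeping; the only place requiring care is making sure the ``for all $j \in [m]$'' quantifier is handled, so that the counterexample is eliminated from every child branch, not just one. This uniform elimination across children is precisely what justifies the tree's branching structure, and it is the reason we can later argue that no candidate is ever revisited after being refuted.
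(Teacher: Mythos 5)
Your proof is correct and follows essentially the same route as the paper: the inclusion $S(u_j)\subseteq S(u)$ from monotonicity of the feasible set under added constraints, and exclusion of the candidate because $c_i^\top x = V(x)$ kills clause \eqref{eq-constraint-smaller} for every $j\neq i$, while the verifier's output (either $c_i^\top x\leq 0$ from \eqref{eq-optim-verifier-pos} or $c_i^\top A_q x\geq 0$ from \eqref{eq-optim-verifier-lie}) kills clause \eqref{eq-constraint-deriv} for $j=i$. Your explicit case split on which verifier LP produced the counterexample merely spells out the disjunction the paper states in one line, so there is no substantive difference.
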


\begin{proof}
The first part is straightforward since $Y(u_j)=Y(u)\uplus(x,i,j)\supseteq Y(u)$.
To show that $(c_i)_{i=1}^m\notin S(u_j)$, note that by definition of $(x,i)$,
it holds that $c_i^\top x=V(x)$.
Hence, for all $j\in[m]\setminus\{i\}$, \eqref{eq-constraint-smaller} is not satisfied.
Furthermore, $(x,i)$ satisfies
\[
c_i^\top x\leq0 \;\vee\; (\exists v\in\calF(x))\ c_i^\top v\geq0.
\]
Thus, \eqref{eq-constraint-deriv} is not satisfied.
We thus conclude that $S(u_j)$ cannot contain the candidate $(c_i)_{i=1}^m$.
\end{proof}

Let us assume that the underlying system $\calF$ has a polyhedral Lyapunov function,
given by $(c_i^*)_{i=1}^m$, satisfying \eqref{eq-constraint-primal}
and without loss of generality assume $\{c_i^*\}_{i=1}^m\subseteq[-1,1]^d$.

\begin{proposition}\label{prop-completeness-search}
At every step of any execution of Alg.~\ref{fig-tree-exploration-pseudocode},
there is an unexplored leaf $v$ such that $(c_i^*)_{i=1}^m\in S(v)$.
\end{proposition}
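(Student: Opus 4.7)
The plan is to proceed by induction on the number of iterations of the main \texttt{while} loop in Algorithm~\ref{fig-tree-exploration-pseudocode}, showing that the invariant ``there exists an \textsc{unexplored} leaf $v$ with $(c_i^*)_{i=1}^m \in S(v)$'' is preserved at each step. The base case is immediate: initially the tree consists of only the root, which is \textsc{unexplored} and has $Y(\text{root})=\emptyset$, so $S(\text{root}) = [-1,1]^d \times \cdots \times [-1,1]^d$ trivially contains $(c_i^*)_{i=1}^m$.

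For the inductive step, assume the invariant holds before an iteration, and let $v$ be an \textsc{unexplored} leaf with $(c_i^*)_{i=1}^m \in S(v)$. The iteration picks some \textsc{unexplored} leaf $u$. If $u \neq v$, then $v$ remains \textsc{unexplored} and the invariant is trivially preserved. If $u = v$, first note that $u$ cannot be marked \textsc{Infeasible}, since $(c_i^*)_{i=1}^m \in S(u)$ shows $S(u)$ is nonempty. If the verifier accepts the chosen candidate, the algorithm returns at Line~\ref{nl-test-passed} and there are no further steps to consider. Otherwise, the verifier produces a counterexample $(x,i)$ and $u$ is expanded into children $u_1,\ldots,u_m$ with $Y(u_j)=Y(u)\cup\{(x,i,j)\}$.

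The crux of the argument is to show that at least one child $u_j$ satisfies $(c_i^*)_{i=1}^m \in S(u_j)$. This is where we exploit the assumption that $(c_i^*)_{i=1}^m$ is a genuine Lyapunov function and hence satisfies \eqref{eq-constraint-primal} at the counterexample point $x$ for the index $i$. By \eqref{eq-constraint-primal}, one of the $m$ clauses holds for $(c_i^*)_{i=1}^m$ at $(x,i)$: either there exists $j\in[m]\setminus\{i\}$ with $(c_i^*)^\top x < (c_j^*)^\top x$, in which case $(c_i^*)_{i=1}^m$ satisfies \eqref{eq-constraint-smaller} for $(x,i,j)$ and therefore lies in $S(u_j)$; or $(c_i^*)^\top x > 0$ and $(c_i^*)^\top v < 0$ for all $v \in \calF(x)$, in which case $(c_i^*)_{i=1}^m$ satisfies \eqref{eq-constraint-deriv} and therefore lies in $S(u_i)$. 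Since $u_j$ is marked \textsc{unexplored} upon creation, the invariant is preserved.

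The main conceptual step, and the only nonroutine one, is recognizing that the $m$-way branching used in the expansion phase mirrors exactly the $m$-clause disjunction in \eqref{eq-constraint-primal} that any valid Lyapunov function must satisfy at $(x,i)$; the rest is bookkeeping. Everything else (the base case and the case $u\neq v$) is immediate from the structure of the algorithm.
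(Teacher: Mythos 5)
Your proof is correct and follows essentially the same route as the paper's: induction on the iterations of the \emph{while} loop, with the inductive step using the fact that $(c_i^*)_{i=1}^m$ satisfies the disjunction in \eqref{eq-constraint-primal} at the counterexample $(x,i)$ to select a clause $j$ (hence a child $u_j$) whose added constraint it satisfies, combined with $(c_i^*)_{i=1}^m\in S(u)$ to conclude membership in $S(u_j)$. Your explicit handling of the \textsc{infeasible} and acceptance cases is slightly more careful bookkeeping than the paper's, but the argument is the same.
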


\begin{proof}
\ifextended%
See Appendix~\ref{ssec-app-proof-prop-completeness-search}.
\else%
See the extended version \cite{berger2022learning}.
\fi
\end{proof}

As a corollary, we get the soundness of the algorithm:

\begin{corollary}\label{cor-sound}
If Alg.~\ref{fig-tree-exploration-pseudocode} returns vectors $(c_i)_{i=1}^m$,
then the polyhedral function associated to $(c_i)_{i=1}^m$ is a Lyapunov function for system $\calF$.
However, if Alg.~\ref{fig-tree-exploration-pseudocode} returns ``No Lyapunov Found'',
then system $\calF$ does not admit an $m$-piece polyhedral Lyapunov function satisfying \eqref{eq-constraint-primal}.
\end{corollary}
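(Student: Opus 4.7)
The plan is to prove the two claims of the corollary separately, both being relatively short consequences of results stated earlier.

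For the first claim, I would argue as follows. The only return statement that outputs vectors $(c_i)_{i=1}^m$ is at Line~\ref{nl-test-passed}, which is reached exactly when the verification step declares the candidate to be Lyapunov. I would then unpack what this means in terms of the two linear-program families \eqref{eq-optim-verifier-pos} and \eqref{eq-optim-verifier-lie}: declaring the candidate Lyapunov requires that \eqref{eq-optim-verifier-pos} is infeasible for every facet $F$ of $[-1,1]^d$ and that \eqref{eq-optim-verifier-lie} is infeasible for every pair $(i,q)\in[m]\times Q$. By the design of these LPs described in Section~\ref{ssec-verification}, the former infeasibility implies that $V(x)>0$ for all $x\in\Re^d_*$ (i.e.\ condition (C1)) and the latter infeasibility implies that for all $x\in\Re^d_*$, all $v\in\calF(x)$ and all active indices $i$ we have $c_i^\top v<0$ (i.e.\ condition (C2)). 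Applying Proposition~\ref{pro-piecewise-linear-lyapunov} then gives that the associated polyhedral $V$ is a Lyapunov function for $\calF$.

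For the second claim, I would proceed by contraposition. Suppose that system $\calF$ does admit an $m$-piece polyhedral Lyapunov function satisfying \eqref{eq-constraint-primal}, with coefficients $(c_i^*)_{i=1}^m$; without loss of generality rescale so that $\{c_i^*\}_{i=1}^m\subseteq[-1,1]^d$, which is the setting of Proposition~\ref{prop-completeness-search}. The only way the algorithm can reach Line~\ref{nl-no-lyap-found} is if the while-loop of Line~\ref{nl-loop-head} terminates, which in turn requires that the tree has no \textsc{unexplored} leaves. However, Proposition~\ref{prop-completeness-search} asserts that at every step of the execution there exists an \textsc{unexplored} leaf $v$ with $(c_i^*)_{i=1}^m\in S(v)$, and in particular such a leaf exists at the moment the loop condition is evaluated for exit. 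This contradicts the hypothesis that no \textsc{unexplored} leaf remains, so the algorithm cannot return ``No Lyapunov Found''.

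Combining the two directions gives the statement of the corollary. I do not foresee a real obstacle here: the whole proof consists in reading off the semantics of the return statements of Alg.~\ref{fig-tree-exploration-pseudocode} and plugging them into Propositions~\ref{pro-piecewise-linear-lyapunov} and \ref{prop-completeness-search}. The only subtle point worth being explicit about is that the verification LPs truly certify (C1) and (C2) rather than just a discretized surrogate; this is why the reduction to the $2d$ facet LPs and the $m\lvert Q\rvert$ mode LPs is exhaustive, which I would briefly justify by the positive homogeneity of the conditions in \eqref{eq-constraint-primal} (so restricting $x$ to the boundary of $[-1,1]^d$, respectively to the hyperplane $c_i^\top x=1$, loses no generality).
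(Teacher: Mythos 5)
Your proposal is correct and follows essentially the same route as the paper: the first claim is read off from the semantics of the verification phase (Line~\ref{nl-test-passed}) together with Proposition~\ref{pro-piecewise-linear-lyapunov}, and the second claim is the contrapositive via Proposition~\ref{prop-completeness-search}, which guarantees an \textsc{unexplored} leaf containing $(c_i^*)_{i=1}^m$ and hence prevents the loop from exiting at Line~\ref{nl-no-lyap-found}. Your added remarks on the homogeneity argument behind the facet and level-set restrictions in the verifier LPs, and on the rescaling of $(c_i^*)_{i=1}^m$ into $[-1,1]^d$, are correct details that the paper leaves implicit.
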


\begin{proof}
If Alg.~\ref{fig-tree-exploration-pseudocode} returns $(c_i)_{i=1}^m$,
it means that $(c_i)_{i=1}^m$ has passed the verification phase (Line~\ref{nl-test-passed}),
thereby ensuring that it provides a Lyapunov function for system $\calF$.
If Alg.~\ref{fig-tree-exploration-pseudocode} terminates without finding a Lyapunov function,
it means that all leaves are marked \textsc{infeasible}.
By applying Proposition~\ref{prop-completeness-search},
we note that no Lyapunov function could have existed in the first place.
\end{proof}

\begin{example}[Running illustrative example]\label{exa-running-process}
Consider the system of Example~\ref{exa-running}.
A $4$-piece polyhedral Lyapunov function for this system
was computed using Alg.~\ref{fig-tree-exploration-pseudocode}.
Its $1$-sublevel set is represented in Fig.~\ref{fig-illustrative-process} (in yellow).
The states involved in the constraints $Y\in\Re^d_*\times[m]\times[m]$
of the node that provided this function are represented by blue dots.
We have also represented a trajectory of the system starting from $x=[1.25,0]^\top$ (in purple).
We observe that the trajectory does not leave the $1$-sublevel set, as expected.
\end{example}

\begin{figure}
\centering
\includegraphics[width=0.9\linewidth]{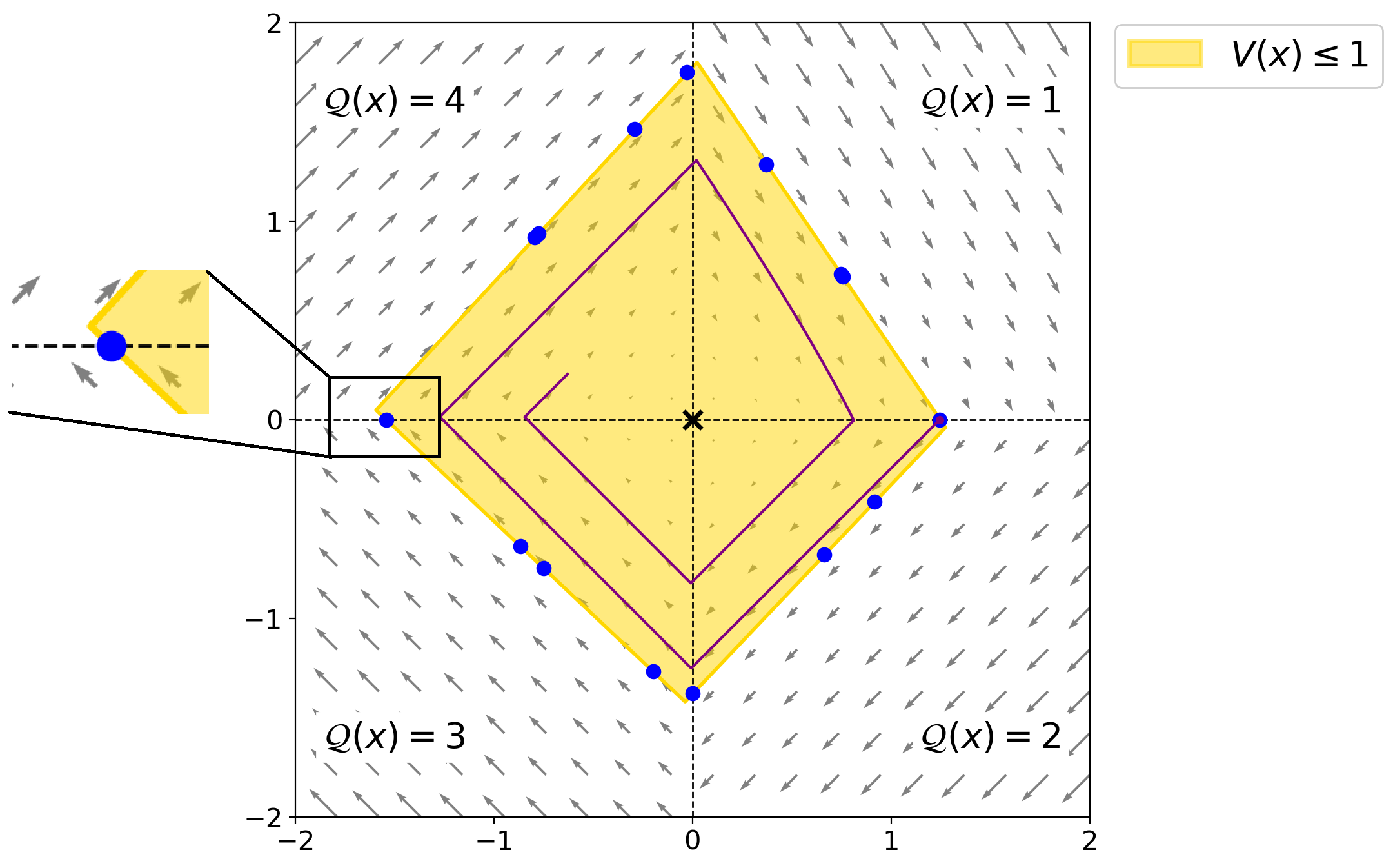}
\caption{$4$-piece polyhedral Lyapunov function $V$ for the system in Example~\ref{exa-running}.
The blue dots are the states involved in the constraints leading to this polyhedral function.
A sample trajectory starting from $x=[1.25,0]^\top$ is represented in purple.
{ Note the small ``offset'' (see zoomed-in inset): the domains of the linear pieces
of $V$ do not coincide with the domains of the hybrid system;
this is necessary for condition (C2) in Proposition~\ref{pro-piecewise-linear-lyapunov}
to be satisfied at the boundary of these domains.}}
\label{fig-illustrative-process}
\end{figure}

Note that there is no guarantee that Alg.~\ref{fig-tree-exploration-pseudocode} will terminate.
To ensure termination, we modify the algorithm by leveraging
a central result in convex nonsmooth optimization, known as the \emph{cutting-plane} argument.

Therefore, assume that at Line~\ref{nl-choose-candidate} of Alg.~\ref{fig-tree-exploration-pseudocode},
the candidate $(c_i)_{i=1}^m$ is chosen as the center of the Maximum Volume Ellipsoid (MVE) inscribed in $S(u)$.
The MVE center of a polyhedron can be computed efficiently
using Semidefinite Programming \cite[Proposition 4.9.1]{bental2001lectures}.
Remember that the rationale of adding constraints from the counterexample (Line~\ref{nl-child-scenario})
is to exclude the candidate from further consideration at all descendant nodes.
The choice of the candidate as the MVE center makes the exclusion stronger
by guaranteeing a minimum rate of decrease
of the volume of $S(u)$ when going from one node to any of its children.
This follows from the cutting-plane argument:

\begin{lemma}[{Cutting-plane argument \cite{boyd2008localization}}]\label{lem-cutting-plane}
Let $\calS\subseteq\Re^r$ be a bounded convex set.
Let $x$ be the MVE center of $\calS$.
Let $\calT$ be a convex set not containing $x$.
It holds that $\vol(\calS\cap\calT)\leq(1-\frac1r)\vol(\calS)$.
\end{lemma}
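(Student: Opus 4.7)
The plan is to reduce the statement to a canonical geometric situation via an affine change of coordinates, and then extract the dimensional factor $1/r$ from John's characterization of MVE centers.

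First, I would apply an affine map to $\Re^r$ taking the MVE of $\calS$ to the unit Euclidean ball $B$ centered at the origin. Affine maps preserve convexity, send MVEs to MVEs, and scale every volume by the same positive constant, so volume ratios are preserved; it therefore suffices to prove the inequality in the normalized setting $x=0$ and $B\subseteq\calS\subseteq rB$, where the last containment is John's theorem applied to the MVE.

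Next, since $\calT$ is convex and does not contain $0$, the separating hyperplane theorem supplies a unit vector $a\in\Re^r$ and a scalar $c\geq 0$ with $\calT\subseteq H^+:=\{y:a^\top y\geq c\}$. Enlarging $H^+$ by pushing $c$ down to $0$ only increases $\vol(\calS\cap H^+)$, so the claim reduces to showing $\vol(\calS\cap H^+_0)\leq (1-1/r)\vol(\calS)$ with $H^+_0=\{y:a^\top y\geq 0\}$; equivalently, the opposite slice must satisfy $\vol(\calS\cap H^-_0)\geq\vol(\calS)/r$. After a rotation I may take $a=e_1$.

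The core dimensional step studies the one-dimensional marginal $f(t):=\vol_{r-1}(\calS\cap\{y_1=t\})$. By Brunn--Minkowski, $f^{1/(r-1)}$ is concave on its support, which the John containment places inside $[-r,r]$; the inclusion $B\subseteq\calS$ supplies the pointwise lower bound $f(t)\geq\omega_{r-1}(1-t^2)^{(r-1)/2}$ on $[-1,1]$. Combining these with a scaling comparison of $\int_{-r}^{0}f(t)\,\diff t$ against $\int_{-r}^{r}f(t)\,\diff t$ yields the desired $1/r$ factor. The main obstacle I anticipate is the final dimensional book-keeping: the naive comparison $B\subseteq\calS\subseteq rB$ alone only gives the much weaker bound $1-1/(2r^r)$, so recovering the tight factor $1-1/r$ requires a careful use of the concavity of $f^{1/(r-1)}$ to balance the skewness of the marginal. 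This is precisely the classical MVE-center cutting-plane estimate, and I would appeal to the cited reference for the detailed computation.
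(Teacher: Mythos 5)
You correctly handle the easy reductions (affine normalization so that the MVE is the unit ball $B$ centered at $x=0$ with $B\subseteq\calS\subseteq rB$, separation of $\calT$ from $x$, relaxation to a neutral cut $\{y_1\ge 0\}$), but the step you defer --- extracting the factor $\tfrac1r$ from Brunn--Minkowski plus the John containments --- is precisely where the argument cannot be completed, and the paper gives you no help here since it imports the lemma from \cite{boyd2008localization} without proof. After your normalization, the only facts about $\calS$ that your argument retains are $B\subseteq\calS\subseteq rB$, the induced pointwise lower bound on the marginal $f$, and concavity of $f^{1/(r-1)}$. These ingredients provably do not imply the claim: the body $\calS_0=\conv\bigl(B\cup(rB\cap\{y_1\ge 0\})\bigr)$ satisfies all of them, yet the part of $\calS_0$ left of the cut lies in the slab $-1\le y_1\le 0$ with cross-sections of radius at most $r-\lvert y_1\rvert(r-1)$, while the right part contains half of $rB$; a short volume comparison then gives $\vol(\calS_0\cap\{y_1\ge\varepsilon\})>(1-\tfrac1r)\vol(\calS_0)$ for small $\varepsilon>0$ and every $r\ge 2$. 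What rules out such bodies is exactly the hypothesis your normalization discards: that $B$ is the \emph{maximum-volume} inscribed ellipsoid of $\calS$ (for $\calS_0$ it is not --- its MVE is much larger and sits in the right half). Any correct proof must exploit that maximality; the Grünbaum-style marginal machinery you invoke is tied to the centroid and naturally yields the constant $1-(\tfrac{r}{r+1})^r$ for centroid cuts, not $1-\tfrac1r$ for MVE-center cuts.

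There is a second, more basic obstruction: the reduced inequality you are trying to prove --- a neutral cut through the MVE center removes at least a $\tfrac1r$ fraction of $\vol(\calS)$ --- fails in the stated generality, so no amount of careful book-keeping can finish the sketch as written. For the equilateral triangle in $\Re^2$ the MVE is the incircle, whose center is the centroid; a line through this center parallel to a side leaves $5/9>1-\tfrac12$ of the area on one side, and taking $\calT$ to be the corresponding open half-plane (a convex set not containing the center) violates the claimed bound with $r=2$. Consequently, ``appealing to the cited reference for the detailed computation'' does not close the gap: the classical guarantees for MVE-center cuts are of a different form (Grünbaum's $1-(\tfrac{r}{r+1})^r$ bound requires the centroid, and the inscribed-ellipsoid cutting-plane analyses bound the volumes of the successive inscribed ellipsoids rather than of the bodies), and in any case ending with that appeal means your proposal establishes nothing beyond what the paper already does by citing \cite{boyd2008localization}.
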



Based on the above, consider the following modification of the algorithm.
Fixed $\epsilon>0$ and consider Alg.~\ref{fig-tree-exploration-pseudocode},
except that in the learning phase, { if $S(u)$ does not contain a ball of radius $\epsilon$,}
then $u$ is declared \textsc{infeasible};
otherwise, we choose the candidate $(c_i)_{i=1}^m$ as the MVE center of $S(Y)$.
See Alg.~\ref{fig-tree-exploration-pseudo-code-modif} for a pseudo-code of the modified learning phase.
It holds that the modified algorithm terminates in finite time.

\begin{algorithm}
\caption{Modify Alg.~\ref{fig-tree-exploration-pseudocode} Line~\ref{nl-choose-candidate} for termination.}\label{fig-tree-exploration-pseudo-code-modif}
\lIf{ $S(u)$ does not contain a ball of radius $\epsilon$}{mark $u$ as \textsc{infeasible}}
Choose $(c_i)_{i=1}^m$ as the MVE center of $S(u)$.
\end{algorithm}

\begin{theorem}[Termination and soundness]\label{thm-sound-termination}
Alg.~\ref{fig-tree-exploration-pseudo-code-modif} always terminates.
Moreover, if Alg.~\ref{fig-tree-exploration-pseudo-code-modif} returns vectors $(c_i)_{i=1}^m$,
then the polyhedral function associated to $(c_i)_{i=1}^m$ is a Lyapunov function for system $\calF$.
If Alg.~\ref{fig-tree-exploration-pseudo-code-modif} returns ``No Lyapunov Found'',
then system $\calF$ does not admit an $\epsilon$-robust polyhedral Lyapunov function,
that is, a polyhedral function for which any ($L_2$) $\epsilon$-perturbation of the linear pieces
satisfies \eqref{eq-constraint-primal}.
\end{theorem}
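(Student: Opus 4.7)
The plan is to address the three claims of the theorem -- termination, soundness of a positive return, and soundness of the ``No Lyapunov Found'' output -- separately; the new ingredient over Corollary~\ref{cor-sound} is the cutting-plane lemma, used both for the depth bound and for an $\epsilon$-robust strengthening of Proposition~\ref{prop-completeness-search}.

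For termination, I invoke Lemma~\ref{lem-cutting-plane} with ambient dimension $r=md$. At every expanded node $u$, the candidate is chosen as the MVE center of $S(u)$, and by Proposition~\ref{prop-candidate-eliminated} this center is excluded from every child's feasibility set $S(u_j)=S(u)\cap T_j$, where the clause-set $T_j$ is an open convex subset of $\Re^{md}$ (an open half-space when $j\neq i$, a finite intersection of open half-spaces when $j=i$). Lemma~\ref{lem-cutting-plane} then yields $\vol(S(u_j))\leq(1-\tfrac{1}{md})\vol(S(u))$. Starting from $\vol(S(\mathrm{root}))\leq 2^{md}$ (from the box constraint $c_i\in[-1,1]^d$), the volume decays geometrically along any root-to-leaf path; the infeasibility test fires once the volume drops below that of an $\epsilon$-ball in $\Re^{md}$, since a smaller-volume set cannot contain a ball of radius $\epsilon$. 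This caps the depth by $D=O(md\,\log(1/\epsilon))$ and the number of nodes by $m^{D+1}$, so the algorithm terminates. Positive soundness is then immediate, as in Corollary~\ref{cor-sound}: a returned $(c_i)_{i=1}^m$ has passed the verification checks at line~\ref{nl-test-passed}, so by Proposition~\ref{pro-piecewise-linear-lyapunov} it defines a valid Lyapunov function.

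For the negative soundness I argue by contrapositive: suppose an $\epsilon$-robust Lyapunov function $(c_i^*)_{i=1}^m$ exists, and let $B$ denote the $L_2$ $\epsilon$-ball around $(c_i^*)_{i=1}^m$, so that every point of $B$ is a Lyapunov function. I would upgrade Proposition~\ref{prop-completeness-search} to the invariant that, at every step, some unexplored leaf $v$ satisfies $B\subseteq S(v)$. The inductive step selects, at each branching, the child $j^*$ whose clause-set $T_{j^*}$ contains the whole ball $B$; once the invariant is maintained, $S(v)\supseteq B$ contains a ball of radius $\epsilon$, so $v$ cannot be declared infeasible, contradicting the assumed output.

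The hard part is the inductive step of this strengthened invariant: showing that among the $m$ clauses for the counterexample $(x,i)$ at least one $T_{j^*}$ contains the entire ball $B$, rather than $B$ being split across several of them. The plan is to exploit the strictness of the inequalities in~\eqref{eq-constraint-primal} together with the fact that $(x,i)$ is extracted from the MVE center -- a point that violates every clause and, by $\epsilon$-robustness of $(c_i^*)_{i=1}^m$, lies strictly outside $B$. Combined with the uniform $\epsilon$-margin furnished by $\epsilon$-robustness, this separation should force one $T_{j^*}$ to be ``thick enough'' to contain $B$ in its entirety. Making this geometric reduction precise is the crux of the proof; the remaining bookkeeping parallels that of Proposition~\ref{prop-completeness-search}.
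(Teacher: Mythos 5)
Your termination and positive-soundness arguments coincide with the paper's own proof: the candidate is the MVE center of $S(u)$, Proposition~\ref{prop-candidate-eliminated} shows it lies outside each child's convex set $S(u_j)=S(u)\cap T_j$, so Lemma~\ref{lem-cutting-plane} with $r=md$ gives $\vol(S(u_j))\leq(1-\tfrac1r)\vol(S(u))$, and comparing $\vol(S(\mathrm{root}))=2^{r}$ with the volume of an $\epsilon$-ball bounds the depth; a returned candidate is Lyapunov because it passed the verifier. One quantitative slip: since $-\log(1-\tfrac1r)\approx\tfrac1r$, the depth bound is $O((md)^2\log(1/\epsilon))$, as in \eqref{eq-bound-depth}, not $O(md\log(1/\epsilon))$; this does not affect termination.

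The genuine gap is in the negative-soundness part, precisely at the step you flag as the crux and leave unproven: the claim that, for the counterexample $(x,i)$, some single clause set $T_{j^*}$ contains the entire ball $B$ of $\epsilon$-perturbations of $(c_i^*)_{i=1}^m$. This inductive step is false in general. Every point of $B$ satisfies the disjunction \eqref{eq-constraint-primal} at $(x,i)$, but different points may satisfy it through different clauses: if a hyperplane $\{c:\,c_i^\top x=c_j^\top x\}$ cuts through $B$ (i.e., at the counterexample state $x$ two pieces of the robust function have nearly equal value, which nothing prevents), then clause \eqref{eq-constraint-smaller} fails on the part of $B$ where piece $i$ is the strict maximizer, while on the other part clause \eqref{eq-constraint-deriv} need not hold, because condition (C2) constrains only maximizing pieces, so a perturbation under which piece $i$ is not maximal at $x$ may well have $c_i^\top A_qx\geq0$. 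Hence no child need inherit all of $B$, and the ingredients you propose (strictness of the inequalities, the MVE center lying outside $B$) do not exclude this: where the MVE center sits says nothing about how the clause boundaries slice $B$. There is also a base-case technicality you do not address: $B$ need not be contained in $[-1,1]^{m\times d}=S(\mathrm{root})$, so even the initial inclusion $B\subseteq S(\mathrm{root})$ requires an argument (e.g., rescaling, which changes the robustness radius). So the ball-tracking invariant as you formulate it cannot be pushed through, and the negative-soundness claim needs a different argument than the one you sketch (the paper itself is terse here, asserting the soundness part is analogous to Corollary~\ref{cor-sound}); as written, your plan would fail at exactly this step.
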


\begin{proof}
\ifextended%
See Appendix~\ref{ssec-app-proof-thm-sound-termination}.
\else%
See the extended version \cite{berger2022learning}.
\fi
\end{proof}

\subsection{Complexity analysis}\label{ssec-complexity-analysis}

The dominant factor in the complexity of Alg.~\ref{fig-tree-exploration-pseudo-code-modif}
is the depth $D$ of the tree explored during the execution of the algorithm.
From the proof of Theorem~\ref{thm-sound-termination}, it holds that
\begin{equation}\label{eq-bound-depth}
D \leq \frac{r\log(\epsilon)}{\log(1-\frac1r)},
\end{equation}
where $r=md$.
Alg.~\ref{fig-tree-exploration-pseudo-code-modif} explores at most $m^{D+1}$ nodes before termination.
We deduce the following complexity bound:

\begin{theorem}\label{thm-complexity}
The complexity of Alg.~\ref{fig-tree-exploration-pseudo-code-modif} is
\[
\calO(m^{m^2d^2\log(1/\epsilon)})\; \text{FLOPs}.
\]
\end{theorem}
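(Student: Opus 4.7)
The plan is to combine the depth bound \eqref{eq-bound-depth}, which comes out of the proof of Theorem~\ref{thm-sound-termination}, with two elementary observations: (i) the tree explored by Alg.~\ref{fig-tree-exploration-pseudo-code-modif} has branching factor $m$, so contains at most $m^{D+1}$ nodes, and (ii) the work performed at each node is polynomial in $m$, $d$, $|Q|$ and $\log(1/\epsilon)$, hence is dominated by the factor $m^{D+1}$.

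First I would substitute $r=md$ into \eqref{eq-bound-depth} and simplify the denominator. Since $\log(1-\tfrac1r)\leq -\tfrac1r$ for all $r>1$ (this is the standard inequality $\log(1-t)\leq -t$), the bound becomes
\[
D\ \leq\ \frac{r\log(\epsilon)}{\log(1-\tfrac1r)}\ \leq\ r^2\log(1/\epsilon)\ =\ m^2d^2\log(1/\epsilon),
\]
where I used that both numerator and denominator are negative when $\epsilon\in(0,1)$. This gives the claimed depth in one line.

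Next I would bound the total number of nodes visited. Since every non-leaf node of the tree has exactly $m$ children (Line~\ref{nl-child-scenario} of Alg.~\ref{fig-tree-exploration-pseudocode}) and the depth is at most $D$, the total number of nodes is at most $\sum_{k=0}^{D} m^k \leq m^{D+1}$. Substituting the depth bound gives $m^{D+1}\leq m^{1+m^2d^2\log(1/\epsilon)} = \calO\bigl(m^{m^2d^2\log(1/\epsilon)}\bigr)$.

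Finally I would account for the per-node work: at each node we solve one feasibility/MVE problem over $S(u)\subseteq\Re^{md}$ (an SDP that can be solved in polynomial time in $md$ and the bitsize of the constraints \cite{bental2001lectures}) and at most $2d+m|Q|$ linear programs in the verification phase \eqref{eq-optim-verifier-pos}, \eqref{eq-optim-verifier-lie}. Each of these has size polynomial in $m$, $d$, $|Q|$ and the current constraint set, which itself grows only polynomially with the depth $D$. Hence the cost per node is $\poly(m,d,|Q|,\log(1/\epsilon))$ FLOPs, and this polynomial factor is absorbed into the exponential $m^{m^2d^2\log(1/\epsilon)}$. The only step that requires care is the constant factor and the $+1$ in $m^{D+1}$; both are harmless inside the $\calO(\cdot)$, since $m\geq 2$ and the exponent already dominates. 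Multiplying the bound on the number of nodes by the per-node polynomial cost yields the stated complexity.
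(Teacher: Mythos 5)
Your proposal is correct and follows essentially the same route as the paper: bound the depth via $-\log(1-\tfrac1r)\geq\tfrac1r$ applied to \eqref{eq-bound-depth} with $r=md$, then multiply the resulting $m^{D+1}$ node count by (polynomially bounded) per-node work. The only difference is that you spell out the node-count and per-node-cost details that the paper states in the prose preceding the theorem, and you handle the signs in the depth bound more carefully than the paper's own statement does.
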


\begin{proof}
Using the bound $-\log(1-\frac1r)\geq\frac1r$, we get that $D\leq r^2\log(\epsilon)$.
The proof then follows from \eqref{eq-bound-depth}.
\end{proof}

The complexity of Alg.~\ref{fig-tree-exploration-pseudo-code-modif} is exponential in $d$ and $m$.
The exponential dependence on $d$ is to be expected from Theorem~\ref{thm-np-hard}
which shows that \lyapFun is NP-hard, even with $m=2$.

\section{Numerical example}\label{sec-numerical-example}

Consider the hybrid linear system with two modes,
i.e., $Q=[2]$, domains $H_1=\Re\times\Re_{\geq0}$ and $H_2=\Re\times\Re_{\leq0}$,
and flow matrices:
\[
A_1 = \left[\begin{array}{cc} \frac{-1}2 & 1 \\ -1 & \frac{-1}2 \end{array}\right], \quad
A_2 = \left[\begin{array}{cc} \frac{-3}4 & 1 \\ -1 & \frac{-3}4 \end{array}\right].
\]
Fig.~\ref{fig-rotation} shows the vector field and a sample trajectory of the system.
Although a trivial quadratic Lyapunov function (e.g., $x\mapsto\lVert x\rVert^2$)
exists for this system, finding a polyhedral one is challenging
because many pieces are required to capture the rotational dynamics of the system.

Using the process described in Section~\ref{sec-description-process},
we computed a $8$-piece polyhedral Lyapunov function for the system.
As an approximation of the MVE center, we used the Chebyshev center (center of the largest enclosed Euclidean ball),
which can be computed using Linear Programming (more efficient and reliable than Semidefinite Programming).
Although the theoretical guarantees on the termination of the process using the Chebyshev center are weaker than the ones with the MVE center (see Theorem~\ref{thm-complexity}), this provides a powerful heuristic in practice (see, e.g.,~\cite[\S4.4]{boyd2008localization} and the references therein).
The computation takes about 60 seconds.%
\footnote{On a laptop with processor Intel Core i7-7600u and 16 GB RAM running Windows.
We use Gurobi\textsuperscript{TM}, under academic license, as linear optimization solver.}
The $1$-sublevel set of the polyhedral Lyapunov function is represented in Fig.~\ref{fig-rotation}-right (in yellow).

\begin{figure}
\centering
\includegraphics[width=0.45\linewidth]{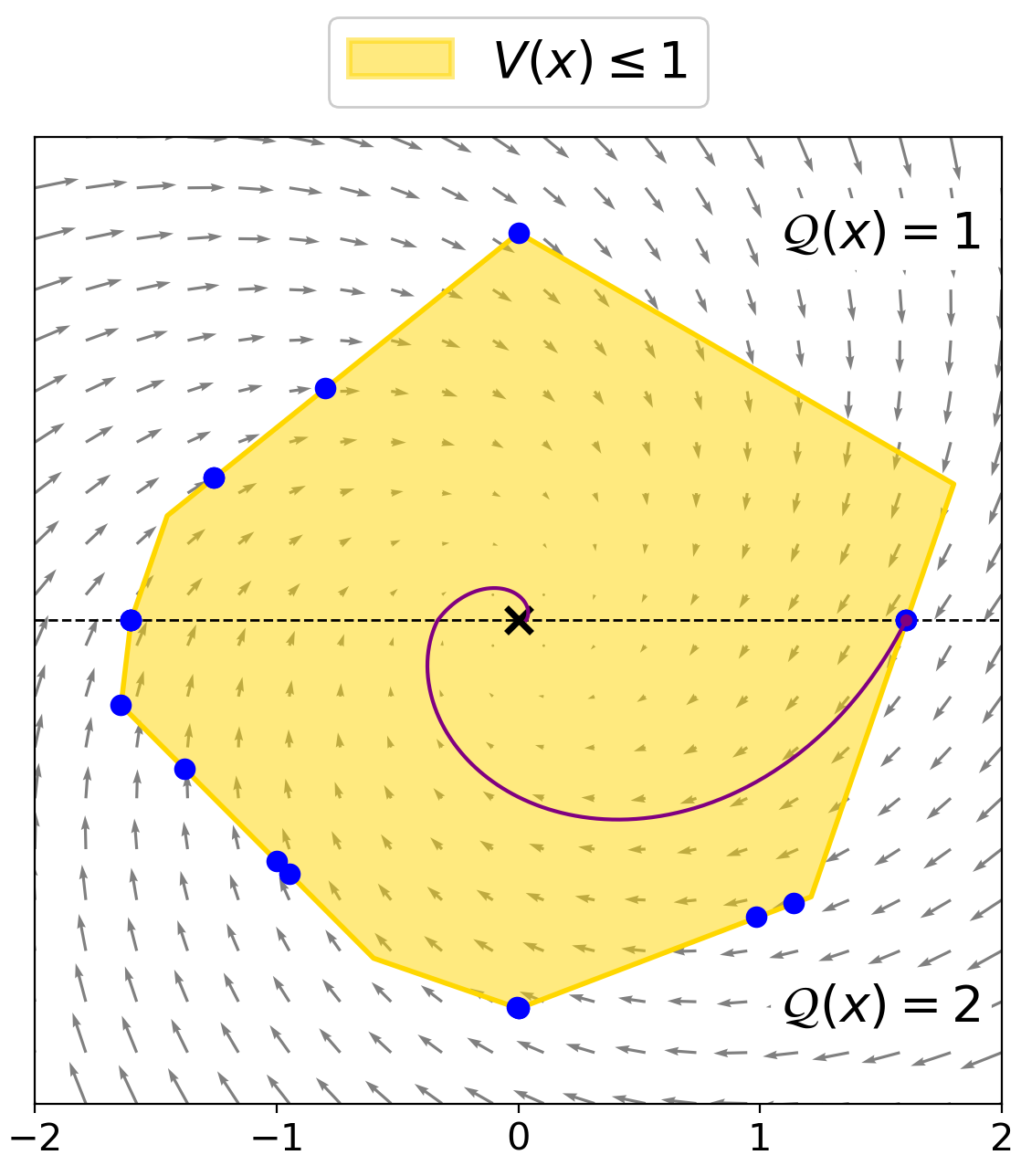}
\caption{Vector field (gray arrows), sample trajectory (purple line)
and $8$-piece polyhedral Lyapunov function (yellow) for the system described in Section~\ref{sec-numerical-example}.
The blue dots are the states involved in the constraints leading to this polyhedral function.}
\label{fig-rotation}
\end{figure}

\section{Conclusions}

In conclusion, we provide a NP-hardness result
for finding fixed-complexity polyhedral Lyapunov function for  hybrid linear systems.
Furthermore, we describe a counterexample-driven approach
based on switching between learning and verification
followed by a tree of possible refutations.
Our approach is modified to yield termination but at the cost of completeness.
Our future work will focus on an efficient implementation of the procedure described in this paper and extensions to barrier and control Lyapunov function synthesis.

\ifextended

\appendix

\subsection{Proof of Theorem~\ref{thm-np-hard}}\label{ssec-app-proof-thm-np-hard}

We show that $m$-Lyap-Fun-Exist with $m=2$ is NP-hard by reducing NAE-SAT3 to it.
NAE-SAT3 (Not All Equal SATisfiability) is a version of the SAT3 problem
that takes as input $L$ clauses $C_1,\ldots,C_L$ over $n$ propositions $p_1,\ldots,p_n$.
Each clause $C_l$ is a triple of literals:
$C_l=(s_{l,1},s_{l,2},s_{l,3})$
wherein $s_{l,h}\in\{p_1,\ldots,p_n\}\cup\{\neg p_1,\ldots,\neg p_n\}$.

Given such a tuple $\varphi=(C_1,\ldots,C_L)$ of clauses, the NAE-SAT3 problem asks
whether there exists an assignment of $\true$ or $\false$ to each proposition $p_k$
such that for each clause $C_l$, at least one literal is true and at least one literal is false.
Without loss of generality, we assume that for each clause,
the three literals involve distinct propositions (otherwise, the clause is trivially ``satisfied'').
It is well known that NAE-SAT3 is NP-complete \cite{papadimitriou1994computational}.

We will now reduce any given NAE-SAT3 instance $\varphi$ with $n$ propositions and $L$ clauses
to yield a hybrid linear system, described by $(\{\calH_q\}_{q\in Q},\{A_q\}_{q\in Q})$,
that has a $2$-piece polyhedral Lyapunov function
iff $\varphi$ has a truth assignment that makes it NAE-SAT3-satisfiable. 

Let $d=n+2$.
Consider a vector of $d$ variables: $x=(\xbar_1,\ldots,\xbar_n,\xhat,\xtilde)$.
Also, consider two vectors of $d$ coefficients:
$f=(\fbar_1,\ldots,\fbar_n,\fhat,\ftilde)$ and $g=(\gbar_1,\ldots,\gbar_n,\ghat,\gtilde)$.
We look for a $2$-piece polyhedral Lyapunov function,
defined by $V(x)=\max\,\{f^\top x,g^\top x\}$, for the system defined below.

For each $k\in[n]$, let
\[
\calH_k=\{(\xbar,\xhat,\xtilde):[(\forall\,k'\in[n]\setminus\{k\})\ \xbar_{k'}=0] \,\wedge\, \xhat=\xtilde=0\}
\]
and consider the dynamics: $\dot{\xbar}_k=-\xbar_k$;
for all $k'\in[n]\setminus\{k\}$, $\dot{\xbar}_{k'}=0$;
$\dot{\xhat}=0$; $\dot{\xtilde}=\lvert x_k\rvert$ (mind the absolute value
which can be cast as a piecewise linear dynamics by splitting $\calH_k$ in two pieces).
Condition~\eqref{eq-constraint-primal} then becomes
\[
\begin{array}{l}
(\forall\,\lambda\neq0) \\[2pt]
[(\fbar_k\lambda\geq\gbar_k\lambda \,\Rightarrow\, (\fbar_k\lambda>0 \,\wedge\, -\fbar_k\lambda + \ftilde\lvert\lambda\rvert<0)) \\[3pt]
\hspace{0.5cm}\,\wedge\, (\gbar_k\lambda\geq\fbar_k\lambda \,\Rightarrow\, (\gbar_k\lambda>0 \,\wedge\, -\gbar_k\lambda + \gtilde\lvert\lambda\rvert<0))].
\end{array}
\]
This is equivalent to saying that (i) $\fbar_k\gbar_k<0$
(i.e., both are nonzero and have opposite signs),
(ii) $\lvert\fbar_k\rvert>\ftilde$, and (iii) $\lvert\gbar_k\rvert>\gtilde$.

If $\fbar_k<0$ (i.e., $\gbar_k>0$), we assign the value \emph{true} to $p_k$;
otherwise, we assign the value \emph{false}.

Now, for each $l\in[L]$, let
\[
\calH_l=\{(\xbar,\xhat,\xtilde):\xbar=0,\:\xtilde=0\}
\]
and consider the dynamics: for all $k\in[n]$, $\dot{\xbar}_k=\sigma_{l,k}\lvert\xhat\rvert$
where
\[
\sigma_{l,k}=\left\{\begin{array}{ll}
1 & \text{if $C_l$ contains $p_k$,} \\
-1 & \text{if $C_l$ contains $\neg p_k$,} \\
0 & \text{otherwise;}
\end{array}\right.
\]
$\dot{\xhat}=0$; $\dot{\xtilde}=-3\lvert\xhat\rvert$.
Condition~\eqref{eq-constraint-primal} then becomes
\[
\begin{array}{@{}l@{}}
(\forall\,\lambda\neq0) \\[2pt]
[(\fhat\lambda\geq\ghat\lambda \,\Rightarrow\, (\fhat\lambda>0 \,\wedge\, \sum_{k=1}^n \sigma_{l,k}\fbar_k-3\ftilde<0)) \\[3pt]
\hspace{0.5cm}\,\wedge\, (\ghat\lambda\geq\fhat\lambda) \,\Rightarrow\, (\ghat\lambda>0 \,\wedge\, \sum_{k=1}^n \sigma_{l,k}\gbar_k-3\gtilde<0))].
\end{array}
\]
This is equivalent to saying that $\fhat\ghat$,
$\sum_{k=1}^n \sigma_{l,k}\fbar_k-3\ftilde$ and $\sum_{k=1}^n \sigma_{l,k}\gbar_k-3\gtilde$ are negative.

Now, it holds that $\sum_{k=1}^n \sigma_{l,k}\fbar_k-3\ftilde<0$ only if
there is $k\in[n]$ such that $\sigma_{l,k}\fbar_k<0$.
Similarly, $\sum_{k=1}^n \sigma_{l,k}\gbar_k-3\gtilde<0$ only if
there is $k\in[n]$ such that $\sigma_{l,k}\gbar_k<0$.
Thus, NAE-SAT3 is satisfied with the assignment described above.
On the other hand, if $p_1,\ldots,p_n$ satisfies NAE-SAT3,
then the assignment: for all $k\in[n]$, $\fbar_k=-\gbar_k=1$ if $\neg p_k$
and $\fbar_k=-\gbar_k=-1$ if $p_k$, $\fhat=-\ghat=1$, and $\ftilde=\gtilde=0.9$,
provides a $2$-piece polyhedral Lyapunov function for the above system.
Thus, the above system admits a $2$-piece polyhedral Lyapunov function
iff $\varphi$ is NAE-SAT3-satisfiable.

\subsection{Further analysis of Example~\ref{exa-running}}\label{ssec-app-exa-running}

We prove that system $\calF$ in Example~\ref{exa-running} does not admit a polynomial Lyapunov function.
For the sake of a contradiction, assume that $V$ is a polynomial Lyapunov function for the system.
Since $V$ is radially unbounded, it is of nonzero even degree.
Fix $r\in\Re_{>0}$ and let $x:\Re_{\geq0}\to\Re^d$
be the trajectory of the system starting from $x(0)=[r,0]^\top$.
It holds that $x(1)=e^{A_2}x(0)=[0,-r]^\top$ and $x(2)=e^{A_3}x(1)=e^{0.01}[-r,0]^\top$.
Since $V$ is of nonzero even degree, we have that $V(e^{0.01}[-r,0]^\top)>V([r,0]^\top)$
whenever $\lvert r\rvert$ is large enough.
Since $r>0$ was arbitrary, this contradicts the property that $V$ decreases
along all trajectories of the system.

\subsection{Proof of Proposition~\ref{prop-completeness-search}}\label{ssec-app-proof-prop-completeness-search}

The proof is by induction on the iterations the algorithm. 
The property is true at the very beginning since $(c_i^*)_{i=1}^m\in S(\text{root})$. 
Assume it is true after $k$ iterations of the \emph{while} loop
and let $v$ be a leaf satisfying the property at this step.
If the algorithm stops at this step, there is nothing to prove.
Thus, assume some unexplored leaf $u$ is to be expanded at this step.
If $u\neq v$, then the property will be trivially satisfied at the subsequent step. 
Hence, suppose $u=v$, and let $(x,i)$ be the counterexample found by the verifier.
Let $j\in[m]$ be such that $(c_i^*)_{i=1}^m\subseteq S(\{(x,i,j)\})$
(which always exists since $(c_i^*)_{i=1}^m$ satisfies \eqref{eq-constraint-primal}).
Since $(c_i^*)_{i=1}^m\subseteq S(Y(u))$, it holds that $(c_i^*)_{i=1}^m\subseteq S(Y(u)\cup\{(x,i,j)\})$,
so that the child node $u_j$ of $u$ satisfies $(c_i^*)_{i=1}^m\in S(u_j)$,
concluding the proof.

\subsection{Proof of Theorem~\ref{thm-sound-termination}}\label{ssec-app-proof-thm-sound-termination}

First, we prove that Alg.~\ref{fig-tree-exploration-pseudo-code-modif} always terminates.
Let $u$ be the leaf that is expanded and let $v$ be any of its children.
Let $r=md$.
Since $(c_i)_{i=1}^m$ is the MVE center of $S(u)$,
it holds by Lemma~\ref{lem-cutting-plane} that $\vol(S(v))\leq(1-\frac1r)\vol(S(u))$.
At the very start of the algorithm, $S(\text{root})\cong[-1,1]^{m\times d}$ which has volume $V_0\coloneqq2^r$.
Let $v$ be an unexplored leaf at depth $D$ of the tree.
By induction, we can show that $\vol(S(v))\leq(1-\frac1r)^DV_0$.
Let $V_\epsilon\coloneqq(2\epsilon)^r$ be the volume of the $\epsilon$-ball $[-\epsilon,\epsilon]^{m\times d}$.
Since we terminate whenever $\vol(S(v))\leq V_\epsilon$,
it holds that $D\leq\frac{\log(V_\epsilon)-\log(V_0)}{\log(1-\frac1r)}$.
In other words, the depth of the tree is upper bounded.
This proves that Alg.~\ref{fig-tree-exploration-pseudo-code-modif} always terminates. 

The proof that Alg.~\ref{fig-tree-exploration-pseudo-code-modif} is sound
is similar to the proof of Theorem~\ref{cor-sound}.

\fi



\bibliographystyle{IEEEtran}
\bibliography{IEEEabrv,myrefs}


\end{document}